\newtheorem{theorem}{Theorem}[section]
\newtheorem{question}[theorem]{Question}
\newtheorem{proposition}[theorem]{Proposition}
\newtheorem{fact}[theorem]{Fact}
\numberwithin{equation}{section}
\newcommand{\CC}{C_k}
\newcommand{\NN}{\mathbb{N}}
\newcommand{\e}{\varepsilon}
\newcommand{\Tt}{\mathcal{T}}
\newcommand{\IR}{\mathbb{R}}
\newcommand{\ZZ}{\mathbb{Z}}
\newcommand{\Ss}{\mathbb{S}}
\newcommand{\Ff}{\mathfrak{F}}
\newcommand{\ZN}{\ZZ^{(\NN)}}
\renewcommand{\phi}{\varphi}
\title[A  characterization of barrelledness of $C_p(X)$]{A  characterization of barrelledness of $C_p(X)$}
\author{S.~Gabriyelyan}
\address{Department of Mathematics, Ben-Gurion University of the
Negev, Beer-Sheva, P.O. 653, Israel}
\email{saak@math.bgu.ac.il}
\begin{document}

\begin{abstract}
We prove that, for a Tychonoff space $X$, the space $C_p(X)$ is barrelled if and only if it is a Mackey group.
\end{abstract}

\maketitle

\section{Introduction}

For a Tychonoff space $X$ we denote by $C_p(X)$ the space $C(X)$ of all continuous functions on $X$ endowed with the pointwise topology. The relation between locally convex properties of $C_p(X)$  and topological properties of $X$ is illustrated by the following famous result  (all relevant definitions see Section \ref{sec:Mackey-Cp}).
\begin{theorem}[Buchwalter--Schmets] \label{t:Mackey-barrelled}
For a Tychonoff space $X$, the space $C_p(X)$ is barrelled if and only if every functionally bounded subset of $X$ is finite.
\end{theorem}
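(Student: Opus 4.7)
\emph{Setup.} I will use the standard duality $C_p(X)' = L(X) := \mathrm{span}_{\RR}\{\delta_x : x \in X\}$, together with the fact that a basic $0$-neighborhood $U_{F,\varepsilon} = \{f \in C(X) : |f(x)| \leq \varepsilon \text{ for all } x \in F\}$, with $F \subseteq X$ finite, has polar
\[
U_{F,\varepsilon}^\circ = \{\mu \in L(X) : \supp(\mu) \subseteq F,\ \|\mu\|_1 \leq \varepsilon^{-1}\}.
\]
Consequently, equicontinuous subsets of $L(X)$ are precisely those $\ell^1$-bounded sets whose elements are supported in a common finite subset of $X$.

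\emph{Necessity (barrelled $\Rightarrow$ functionally bounded $=$ finite), by contrapositive.} Given an infinite functionally bounded $A \subseteq X$, I would set
\[
B_A := \bigcap_{x \in A}\{f \in C(X) : |f(x)| \leq 1\}.
\]
This $B_A$ is absolutely convex, closed in $C_p(X)$ (as an intersection of closed halfspaces), and absorbing (since every $f \in C(X)$ is bounded on $A$), hence a barrel. If it were a $0$-neighborhood, some $U_{F,\varepsilon}$ with $F$ finite would be contained in $B_A$; picking $y \in A \setminus F$ and, via the Tychonoff property, an $f \in C(X)$ vanishing on $F$ with $f(y) = 2$ yields a contradiction.

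\emph{Sufficiency.} Assume every functionally bounded subset of $X$ is finite, let $B$ be a barrel in $C_p(X)$, and set $H := B^\circ \subseteq L(X)$. Since $B$ absorbs every $f \in C(X)$, the set $H$ is $\sigma(L(X), C(X))$-bounded. The plan is to show that $S := \bigcup_{\mu \in H}\supp(\mu)$ is functionally bounded in $X$. Granted this, $S$ is finite by hypothesis, so $H$ sits in the finite-dimensional subspace $L(S) \cong \RR^S$, where $\sigma$-boundedness coincides with norm-boundedness; then $H$ is equicontinuous, and the bipolar theorem gives that $B = H^\circ$ contains a basic $0$-neighborhood.

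\emph{Main obstacle.} The hard part is proving that $S$ is functionally bounded. I would argue by contradiction: given $f \in C(X)$ unbounded on $S$, choose distinct $x_n \in S$ with $|f(x_n)| \to \infty$, and for each $n$ some $\mu_n \in H$ with $a_n := \mu_n(\{x_n\}) \neq 0$. The strategy is to thin to a subsequence along which the finite supports $\supp(\mu_n)$ are pairwise disjoint (via a $\Delta$-system-type argument, exploiting that the $x_n$ are distinct), and then, using the Tychonoff property, to construct a single $g \in C(X)$ with $g(x_n) = f(x_n)/a_n$ and $g \equiv 0$ on $\supp(\mu_n) \setminus \{x_n\}$ for every $n$; this would give $|\mu_n(g)| = |f(x_n)| \to \infty$, contradicting the $\sigma$-boundedness of $H$. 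The delicate point is arranging, by a further thinning if necessary, that $\{x_n\}$ becomes a closed discrete subset of $X$ separated from the closure of $\bigcup_n(\supp(\mu_n) \setminus \{x_n\})$, so that the desired $g$ can be produced by a standard Urysohn-type construction in the Tychonoff space $X$.
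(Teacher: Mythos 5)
The paper does not prove this statement: Theorem \ref{t:Mackey-barrelled} is quoted as a known classical result (Buchwalter--Schmets) and only used, so your attempt can only be judged on its own merits. Your necessity half is correct and complete: $B_A=\{f\in C(X):|f(x)|\le 1\ \forall x\in A\}$ is a barrel precisely because $A$ is functionally bounded, and it fails to be a neighborhood of zero because $A\setminus F\neq\emptyset$ for every finite $F$ and Tychonoff separation produces $f\in U_{F,\e}\setminus B_A$. The architecture of your sufficiency half (pass to $H=B^\circ$, show $S=\bigcup_{\mu\in H}\supp(\mu)$ is functionally bounded hence finite, then use finite-dimensionality of $L(S)$ and the bipolar theorem) is also the right one.

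However, the step you label the ``main obstacle'' is not a delicate technicality to be arranged by thinning --- it is the entire content of the theorem, and your sketch of it has concrete gaps. First, a $\Delta$-system extraction gives supports that agree on a common finite root $R$, not pairwise disjoint supports; splitting $\mu_n=\rho_n+\nu_n$ with $\supp(\rho_n)\subseteq R$ does not obviously preserve $\sigma(L(X),C(X))$-boundedness of either piece (cancellation between the root part and the disjoint part is possible), so the reduction needs an argument. Second, and more seriously, set-theoretic disjointness of the supports is not topological separation: the auxiliary points $\supp(\mu_n)\setminus\{x_n\}$ are completely uncontrolled and may accumulate at the $x_m$'s themselves (e.g.\ $\supp(\mu_n)$ may contain points converging to each $x_m$ as $n\to\infty$), in which case no passage to a subsequence makes $\{x_n\}$ disjoint from the closure of $\bigcup_n(\supp(\mu_n)\setminus\{x_n\})$. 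Third, even on a closed discrete set a Tychonoff space need not admit a continuous function with the prescribed unbounded values $f(x_n)/a_n$; that requires $C$-embedding. The standard repair is to transport everything through $f$: after thinning, the values $f(x_n)$ lie in pairwise disjoint intervals $I_n$ escaping to infinity, so $V_n=f^{-1}(I_n)$ is a pairwise disjoint \emph{locally finite} open family, which makes infinite sums $\sum_n c_n g_n$ (with $g_n$ supported in $V_n$) continuous and lets one prescribe values via $\phi\circ f$. One then needs a recursive gliding-hump choice of the $g_n$ and the coefficients $c_n$ to dominate the cross terms $\mu_{n_j}(g_k)$, $k\neq j$, and controlling those cross terms requires the a priori bound $\sup_{\mu\in H}\|\mu\|_1<\infty$ --- itself a nontrivial claim that your proof never establishes. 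As written, the sufficiency direction is therefore incomplete at its central step.
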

In this paper we give another characterization of barrelledness of $C_p(X)$ using the notion of Mackey group in the class $\mathcal{LQC}$ of all locally quasi-convex abelian groups.

For an abelian topological group $G$ we denote by $\widehat{G}$ the group of all continuous characters of $G$. Two topologies  $\tau$ and $\nu$ on an abelian group $G$  are said to be {\em compatible } if $\widehat{(G,\tau)}=\widehat{(G,\nu)}$. Being motivated by the classical Mackey--Arens theorem the following notion was  introduced and studied in \cite{CMPT} (see also \cite{NietoMP,Gab-Mackey}): a locally quasi-convex abelian group $(G,\tau)$ is called a {\em Mackey group in  $\mathcal{LQC}$} or simply a {\em Mackey group} if for every compatible locally quasi-convex group topology $\nu$ on $G$ associated with $\tau$ it follows that $\nu\leq \tau$. Every barrelled locally convex space (lcs for short)  is a Mackey group by \cite{CMPT}, see also Proposition \ref{p:Mackey-Chasco} below. If a lcs $E$ is a Mackey group   then it is also a Mackey space,  but the converse is not true in general. The first example of a Mackey (even metrizable) lcs  $E$ which is not a Mackey group is given in \cite{Gab-Mackey}. For any Tychonoff space $X$ the space $C_p(X)$ is a Mackey {\em space} because it is  quasibarrelled by Corollary 11.7.3 of \cite{Jar}. However, it turns out that to be a Mackey {\em group} for $C_p(X)$ is equivalent to be a barrelled space.
\begin{theorem} \label{t:Mackey-barrelled-main}
For a Tychonoff space $X$, the space $C_p(X)$ is barrelled if and only if it is a Mackey group.
\end{theorem}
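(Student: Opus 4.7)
The forward direction, ``barrelled implies Mackey group'', is exactly Proposition~\ref{p:Mackey-Chasco} quoted in the introduction, so no new work is needed there. The substance of the theorem lies in the converse, which I would approach by the contrapositive via Theorem~\ref{t:Mackey-barrelled} (Buchwalter--Schmets): if $X$ carries an infinite functionally bounded subset $A$, then I will construct a locally quasi-convex group topology $\nu$ on the abelian group $C(X)$ that is compatible with the pointwise topology $\tau_p$, in the sense that $\widehat{(C(X),\nu)}=\widehat{C_p(X)}$, but is strictly finer than $\tau_p$. By definition of a Mackey group this contradicts Mackey-ness of $C_p(X)$. Passing to a countable subset, we may assume $A=\{x_n\}_{n\in\NN}$ with all $x_n$ distinct.

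To define $\nu$, identify $\widehat{C_p(X)}$ with $L(X)$, the space of finitely-supported real functions on $X$, where $\ell\in L(X)$ corresponds to the character $f\mapsto e^{2\pi i\,\ell(f)}$. Functional boundedness of $A$ says precisely that $B:=\{\delta_{x_n}:n\in\NN\}\subseteq L(X)$ is $\sigma(L(X),C(X))$-bounded. Introduce its group polar
\[
U\ :=\ B^{\triangleright}\ =\ \bigl\{f\in C(X): f(x_n)\in[-\tfrac14,\tfrac14]+\ZZ\ \text{for every}\ n\in\NN\bigr\}.
\]
The set $U$ is symmetric, quasi-convex, and (using functional boundedness of $A$) absorbing. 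Let $\nu$ be the locally quasi-convex group topology on $C(X)$ generated by $U$ together with all basic $\tau_p$-neighborhoods of~$0$, so that $\nu\geq\tau_p$ by construction.

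Two points require verification. \emph{Strict refinement}, $\nu>\tau_p$, is essentially bookkeeping: since $X$ is Tychonoff and $A$ is infinite, given any finite $F\subset X$ I can pick $x_{n_0}\in A\setminus F$ and by complete regularity produce a continuous $f$ with $f|_F\equiv 0$ and $f(x_{n_0})=\tfrac12$; then $f$ belongs to every basic $\tau_p$-neighborhood depending only on $F$, yet $f\notin U$ since $\tfrac12\notin[-\tfrac14,\tfrac14]+\ZZ$, so $U$ itself is not $\tau_p$-open. \emph{Compatibility}, $\widehat{(C(X),\nu)}=\widehat{C_p(X)}$, is the main obstacle. One inclusion is automatic from $\nu\geq\tau_p$; for the reverse, I must show every $\nu$-continuous character has the form $e^{2\pi i\,\ell}$ with $\ell\in L(X)$. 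My plan is a group-bipolar argument in the spirit of \cite{CMPT}: any $\nu$-continuous character is captured by the weak-star closed quasi-convex hull in $L(X)$ of $B$, together with finitely many auxiliary point masses coming from the $\tau_p$-neighborhood factor, and the technical heart is to verify that this hull remains inside $L(X)$. Ruling out ``limit'' characters secretly supported on the whole set $A$ is where the hypothesis that $A$ is \emph{functionally} bounded, not merely pointwise bounded, is essential, and this is the step I expect to demand the most care.
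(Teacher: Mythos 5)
Your overall architecture is the same as the paper's: the forward direction is Proposition~\ref{p:Mackey-Chasco}, and the converse is reduced via Buchwalter--Schmets to producing, from an infinite functionally bounded set $A$, a locally quasi-convex group topology on $C(X)$ that is compatible with $\tau_p$ but strictly finer. Your strictness argument is correct. The problem is that the compatibility step --- which you yourself flag as ``the main obstacle'' and leave as a plan --- is not a detail but the entire mathematical content of the theorem, and your specific construction makes it harder rather than easier. The topology generated by the quasi-convex set $U=\{f: f(x_n)\in[-\tfrac14,\tfrac14]+\ZZ\ \forall n\}$ (note also that to get a group topology you need the standard shrinking sets $U_{(k)}=\{f: jf\in U,\ j\le k\}$, which you do not introduce) has a dual that a priori contains characters of the form $f\mapsto e^{2\pi i\sum_n c_n f(x_n)}$ with $(c_n)\in\ell_1$ infinitely supported; these are perfectly well defined on $C(X)$ because $A$ is functionally bounded, they do not lie in $L(X)$, and excluding them from $\widehat{(C(X),\nu)}$ is exactly the hard point. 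A ``group-bipolar argument in the spirit of \cite{CMPT}'' does not do this for free: computing the quasi-convex hull of $\{\delta_{x_n}\}$ inside the dual of the \emph{new} topology is precisely the question, and quasi-convex hulls are notoriously resistant to soft arguments.

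The paper resolves this with two ingredients you are missing. First, it normalizes the evaluations to $\chi_n=\tfrac1n\delta_{a_n}$ (using points $a_n$ with pairwise disjoint closed neighborhoods $\overline{U_n}$, from Fact~\ref{l:Mackey-discrete}), so that $f\mapsto(\chi_n(f))$ lands in $c_0$, and realizes the finer topology as the one induced by the embedding $F_0:C(X)\to C_p(X)\times\Ff_0(\Ss)$, where $\Ff_0(\Ss)$ is the group $c_0(\Ss)$ with the sup-metric; the crucial external input is $\widehat{\Ff_0(\Ss)}=\ZN$ (Theorem 1 of \cite{Gab}), which together with Hahn--Banach pins every $\Tt_0$-continuous character down to the form $p(\xi+\sum_n c_n\chi_n)$ with $\xi\in(C_p(X))'$ and $(c_n)\in\ell_1$. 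Second, to kill a nonzero tail it uses a concrete test function: a bump $f$ supported in $U_\alpha$ with $f(a_\alpha)=1$, scaled to $h=\bigl[\tfrac{1}{4c_\alpha}\bigr]f$, which lies in the given $\Tt_0$-neighborhood (because $R_0(h)=0$, $h$ taking integer values at the $a_n$) yet has $\eta(h)$ within $\tfrac1{100}$ of $\tfrac14$, contradicting continuity. You would need to supply an analogue of both steps --- a computation of the dual of your topology and an explicit separating test function --- before your proposal constitutes a proof; as written, the converse direction is asserted rather than established.
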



\section{Proof of Theorem \ref{t:Mackey-barrelled-main}} \label{sec:Mackey-Cp}

Recall that a subset $A$ of a topological space $X$ is called {\em functionally bounded in $X$} if every continuous real-valued function on $X$ is bounded on $A$.

In what follows we need some notations. Denote by $\mathbb{S}$ the unit circle group and set $\Ss_+ :=\{z\in  \Ss:\ {\rm Re}(z)\geq 0\}$.
 If $\chi\in \widehat{G}$, it is considered as a homomorphism from $G$ into $\mathbb{S}$. A subset $A$ of $G$ is called {\em quasi-convex} if for every $g\in G\setminus A$ there exists   $\chi\in \widehat{G}$ such that $\chi(g)\notin \Ss_+$ and $\chi(A)\subseteq \Ss_+$.
If $A\subseteq G$ and $B\subseteq \widehat{G}$ set
\[
A^\triangleright :=\{ \chi\in \widehat{G}: \chi(A)\subset \Ss_+\}, \quad B^\triangleleft:=\{ g\in G: \chi(g)\in\Ss_+ \; \forall \chi\in B\}.
\]
Then $A$ is quasi-convex if and only if $A^{\triangleright\triangleleft}=A$.
An abelian topological group is called {\em locally quasi-convex} if it admits a neighborhood base at zero consisting of quasi-convex sets.

Let $G$ be an abelian topological group such that $\widehat{G}$ separates the points of $G$. Denote by $\sigma(G,\widehat{G})$ the {\em weak topology} on $G$, i.e., the smallest group topology on $G$ for which the elements of $\widehat{G}$ are continuous. In the dual group $\widehat{G}$, we denote by $\sigma(\widehat{G},G)$ the topology of pointwise convergence.  Proposition 1.5 of \cite{Ban} implies
\begin{fact} \label{f:Mackey-Ban}
If $U$ is a neighborhood of zero of an abelian topological group $G$, then $U^\triangleright$ is a $\sigma(\widehat{G},G)$-compact subset of $\widehat{G}$.
\end{fact}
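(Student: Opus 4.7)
The plan is to establish this as a topological-group version of the Banach--Alaoglu theorem. I would embed $\widehat{G}$ into the compact product $\Ss^G$ via the evaluation map $e(\chi):=(\chi(g))_{g\in G}$, under which $\sigma(\widehat{G},G)$ is precisely the subspace topology inherited from the product topology on $\Ss^G$ (this is immediate from the definitions). By Tychonoff, $\Ss^G$ is compact, so it suffices to exhibit $e(U^\triangleright)$ as a closed subset of $\Ss^G$.

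To this end I would consider
\[
K:=\{f\in \Ss^G:\ f(x+y)=f(x)f(y)\ \forall x,y\in G,\ \text{ and }\ f(U)\subseteq \Ss_+\}.
\]
Both defining conditions cut out closed subsets of $\Ss^G$: the homomorphism relations are the intersection, over pairs $(x,y)\in G\times G$, of preimages of the closed diagonal of $\Ss$ under continuous coordinate projections, and the polar condition is the intersection, over $u\in U$, of preimages of the closed arc $\Ss_+$. Hence $K$ is closed in $\Ss^G$ and therefore compact. The problem reduces to identifying $K$ with $e(U^\triangleright)$; the inclusion $e(U^\triangleright)\subseteq K$ is immediate, so only the reverse inclusion requires argument.

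The main (and really only nontrivial) step is to show that every $f\in K$ is automatically continuous on $G$, so that $K\subseteq e(\widehat{G})$. Here I would exploit joint continuity of addition: for each $n\geq 1$, pick a symmetric neighborhood $U_n$ of $0$ with $\underbrace{U_n+\cdots+U_n}_{n\text{ summands}}\subseteq U$. Then $kx\in U$ for every $x\in U_n$ and every $1\leq k\leq n$, whence $f(x)^k=f(kx)\in \Ss_+$. An elementary argument on the circle shows that any $z\in\Ss$ with $z,z^2,\ldots,z^n\in\Ss_+$ must lie in the arc $\{e^{i\theta}:|\theta|\leq\pi/(2n)\}$, so $f(U_n)$ shrinks into an arbitrarily small neighborhood of $1$ as $n$ grows. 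Thus $f$ is continuous at $0$, and the homomorphism identity $f(x+h)=f(x)f(h)$ promotes this to continuity everywhere. This identifies $K$ with $e(U^\triangleright)$ and yields the $\sigma(\widehat{G},G)$-compactness of $U^\triangleright$.

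I expect this last point---upgrading pointwise limits of characters to genuinely continuous characters---to be the only delicate step; the compactness part is pure Tychonoff plus a routine closedness check, and the circle estimate above is the standard device that makes the polar $U^\triangleright$ behave like an equicontinuous family of characters on a small neighborhood of $0$.
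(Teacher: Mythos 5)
Your argument is correct and complete: the Tychonoff embedding of $\widehat{G}$ into $\Ss^G$, the closedness of the set of (not necessarily continuous) homomorphisms landing in $\Ss_+$ on $U$, and the estimate $z,z^2,\dots,z^n\in\Ss_+\Rightarrow z\in\{e^{i\theta}:|\theta|\le\pi/(2n)\}$ applied to $U_n$ with $U_n+\cdots+U_n\subseteq U$ together show that every pointwise limit point of $U^\triangleright$ is again a continuous character in $U^\triangleright$. The paper itself gives no proof --- it cites Proposition 1.5 of Banaszczyk --- and the argument there is essentially the one you have reconstructed, so your proposal matches the intended proof.
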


Let $E$ be a nontrivial locally convex space and denote by $E'$ the topological dual space of $E$. Clearly,  $E$ is also an abelian topological group. Therefore we can consider the group $\widehat{E}$ of all continuous characters of $E$. The next important result is proved in \cite{HeZu,Smith}, see also \cite[23.32]{HR1}.
\begin{fact} \label{f:Mackey-group-lcs}
Let $E$ be a locally convex space. Then the mapping $p:E' \to \widehat{E}$, defined by the equality
\[
p(f)=\exp\{ 2\pi i f\}, \quad \mbox{ for all } f\in E',
\]
is a group isomorphism between $E'$ and $\widehat{E}$.
\end{fact}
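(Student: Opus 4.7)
The plan is to verify the four components of the claim: well-definedness, the homomorphism property, injectivity, and surjectivity. The first three are short and formal; essentially all of the content lies in surjectivity, i.e.\ in showing that every continuous character $\chi : E \to \Ss$ lifts to a continuous linear functional $f \in E'$ via the covering map $t \mapsto \exp\{2\pi i t\}$ from $\RR$ to $\Ss$.

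For well-definedness and the homomorphism property, I would simply note that for any $f \in E'$, $p(f) = \exp\{2\pi i f\}$ is continuous as a composition of continuous maps and is a character since $\exp$ converts addition in $\RR$ to multiplication in $\Ss$; moreover $p(f+g) = p(f)\,p(g)$ for the same reason. Injectivity follows from the connectedness of $E$ (every locally convex space is path-connected via line segments): if $p(f) \equiv 1$ then $f(E) \subseteq \ZZ$, and a continuous function from a connected space into $\ZZ$ is constant, hence $f \equiv f(0) = 0$.

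The heart of the proof is surjectivity. Fix $\chi \in \widehat{E}$. The standard approach is a local lifting followed by a radial extension. Because $\exp$ is a local homeomorphism from $(-1/4,1/4)$ onto $\Ss_+ \setminus \{-1\}$ (indeed onto a neighborhood of $1$ with a well-defined continuous inverse, the branch of $\tfrac{1}{2\pi}\mathrm{Arg}$), continuity of $\chi$ at $0$ gives a balanced open neighborhood $U$ of $0 \in E$ with $\chi(U) \subseteq \{e^{2\pi i t} : |t|<1/4\}$, on which I set $f_0 := \tfrac{1}{2\pi}\mathrm{Arg} \circ \chi : U \to (-1/4,1/4)$, a continuous function with $f_0(0)=0$. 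For arbitrary $x \in E$, pick any integer $n\ge 1$ with $x/n \in U$ (such $n$ exists since $U$ is absorbing), and define
\[
f(x) := n\, f_0(x/n).
\]
I would then verify in turn: (i) independence of $n$, by observing that for any $m,n$ large enough, $m f_0(x/m)$ and $n f_0(x/n)$ both lift $\chi(x)$ along $\exp(2\pi i \cdot)$ and differ by an integer, while the usual continuity/contraction argument on $[0,1] \ni t \mapsto tx$ forces that integer to be $0$; (ii) additivity $f(x+y) = f(x) + f(y)$, by choosing $n$ large enough that $x/n, y/n, (x+y)/n \in U$ and using that both sides lift the same point of $\Ss$ and are small enough to force equality rather than an integer jump; (iii) continuity of $f$ at $0$, inherited from continuity of $f_0$ together with the definition via division by a fixed $n$ on the absorbing set $nU$.

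The main obstacle will be item (i)--(ii): controlling the $\ZZ$-ambiguity of the lift so that the radial extension is genuinely well-defined and additive. Once $f$ is known to be additive and continuous on the real vector space $E$, $\QQ$-linearity is automatic from additivity, and continuity upgrades this to $\RR$-linearity, so $f\in E'$ with $p(f)=\chi$. Combined with injectivity and the homomorphism property, this shows $p$ is a group isomorphism $E' \to \widehat{E}$.
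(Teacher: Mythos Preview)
Your argument is correct and follows the classical lifting approach: the local logarithm on a balanced neighborhood, radial extension, and control of the $\ZZ$-ambiguity via the balancedness of $U$ and the fact that small lifts differing by an integer must coincide. The sketch is sound, and the points you flag as needing care (independence of $n$, additivity) are handled by the standard trick of passing to a common multiple and using that $U$ balanced forces all intermediate points $kx/(mn)$ back into $U$.

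That said, there is nothing in the paper to compare against: the paper does not prove this statement at all. It is recorded as a \emph{Fact} with citations to Hewitt--Zuckerman, Smith, and Hewitt--Ross (23.32), and no argument is given. Your proposal is essentially the proof one finds in those references, so in that sense it matches the intended source, but strictly speaking the paper's ``proof'' is a citation.
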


The following fact is Lemma 1.2 of \cite{ReT} (for a more general result with a simpler proof, see \cite{Ga-Top}).
\begin{fact} \label{f:Mackey-ReT}
Let $E$ be a locally convex space. Then the space $(E, \sigma(E,E'))$ and the group $(E, \sigma(E,\widehat{E}))$ have the same family of compact subsets. Consequently, if $K$ is a $\sigma(\widehat{E},E)$-compact subset of $\widehat{E}$, then the subset $p^{-1}(K)$ of $E'$ is   $\sigma(E',E)$-compact, where $p$ is defined in Fact \ref{f:Mackey-group-lcs}.
\end{fact}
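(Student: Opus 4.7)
The plan is to prove both parts of the statement in sequence: first the claim that $\sigma(E,E')$ and $\sigma(E,\widehat{E})$ have the same compact subsets of $E$, then the consequence for $p^{-1}(K)$.

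First I would handle the easy inclusion. For any $f\in E'$ the character $p(f)=\exp(2\pi i f)$ is $\sigma(E,E')$-continuous (as a composition of $f$ with $\exp$), so $\sigma(E,\widehat{E})\le \sigma(E,E')$. Hence every $\sigma(E,E')$-compact subset of $E$ is automatically $\sigma(E,\widehat{E})$-compact.

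For the nontrivial direction, let $K\subseteq E$ be $\sigma(E,\widehat{E})$-compact and fix $f\in E'$. Since $\exp(2\pi i t f)\in\widehat{E}$ for every $t\in\mathbb{R}$, the product map
\[
\Phi_f:E\to \mathbb{T}^{\mathbb{R}},\qquad x\mapsto \bigl(\exp(2\pi i t f(x))\bigr)_{t\in\mathbb{R}},
\]
is $\sigma(E,\widehat{E})$-continuous. It factors as $\Phi_f=\iota\circ f$, where $\iota:\mathbb{R}\hookrightarrow b\mathbb{R}\subseteq \mathbb{T}^{\mathbb{R}}$ is the canonical dense embedding into the Bohr compactification. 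Thus $\iota(f(K))=\Phi_f(K)$ is compact in $b\mathbb{R}$, i.e., $f(K)$ is compact in the Bohr topology of $\mathbb{R}$. At this point I would invoke Glicksberg's theorem for the LCA group $\mathbb{R}$: a subset of $\mathbb{R}$ is compact in the Bohr topology if and only if it is compact in the usual topology. It follows that $f(K)$ is bounded in $\mathbb{R}$, and moreover the Bohr and usual topologies on $\mathbb{R}$ agree on any bounded subset.

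Next I would show $\sigma(E,\widehat{E})$ and $\sigma(E,E')$ coincide on $K$. Let $x_\alpha\to x$ in $K$ with respect to $\sigma(E,\widehat{E})$, and fix $f\in E'$. Then $(f(x_\alpha))$ lies in the bounded set $f(K)$ and satisfies $\exp(2\pi i t f(x_\alpha))\to \exp(2\pi i t f(x))$ for every $t\in\mathbb{R}$, i.e., it Bohr-converges to $f(x)$; by the preceding paragraph this forces $f(x_\alpha)\to f(x)$ in $\mathbb{R}$. Since $f$ was arbitrary, $x_\alpha\to x$ in $\sigma(E,E')$. Thus the identity map $(K,\sigma(E,\widehat{E}))\to (K,\sigma(E,E'))$ is a continuous bijection from a compact space to a Hausdorff space, hence a homeomorphism, and $K$ is $\sigma(E,E')$-compact.

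Finally, for the clause on $p^{-1}(K)$ I would apply the part just proved to the lcs $F:=(E',\sigma(E',E))$. By the Mackey--Arens theorem its topological dual is $F'=E$, and by Fact \ref{f:Mackey-group-lcs} applied to $F$, $\widehat{F}$ is identified with $E$ via $x\mapsto \exp(2\pi i\langle \cdot,x\rangle)$. Under these identifications $p:E'\to \widehat{E}$ becomes a homeomorphism from $(F,\sigma(F,\widehat{F}))$ onto $(\widehat{E},\sigma(\widehat{E},E))$, while $\sigma(F,F')=\sigma(E',E)$. The already established first half of the statement, applied to $F$, says that compact sets in $\sigma(F,\widehat{F})$ and $\sigma(F,F')$ coincide; pulling a $\sigma(\widehat{E},E)$-compact $K\subseteq \widehat{E}$ back through $p$ yields that $p^{-1}(K)$ is $\sigma(E',E)$-compact. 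The single real obstacle is Glicksberg's theorem for $\mathbb{R}$; everything else is bookkeeping around it, and any attempt to avoid Glicksberg essentially amounts to re-proving (for nets) that a Bohr-Cauchy net in $\mathbb{R}$ cannot have $|\cdot|\to\infty$.
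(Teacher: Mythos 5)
Your proof is correct, and it is essentially the proof behind the statement: the paper itself gives no argument for this Fact but cites Lemma 1.2 of \cite{ReT}, whose proof is precisely your reduction — push a $\sigma(E,\widehat{E})$-compact set into $\mathbb{R}$ via each $f\in E'$, invoke Glicksberg's theorem that $\mathbb{R}$ respects compactness, and use that the Bohr and usual topologies of $\mathbb{R}$ agree on bounded sets. The ``consequently'' clause via applying the first part to $(E',\sigma(E',E))$ and identifying $p$ with the duality of Fact \ref{f:Mackey-group-lcs} is the standard bookkeeping, so nothing further needs comparing.
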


Recall that a lcs $E$ is {\em barrelled} if and only if every $\sigma(E',E)$-bounded  subset of $E'$ is equicontinuous. By Proposition 5.4 of \cite{CMPT}, every barrelled space $E$ is a Mackey group. For the sake of completeness and the convenience of the reader, we give below a direct and short proof of this fact.
\begin{proposition}[\cite{CMPT}] \label{p:Mackey-Chasco}
Every barrelled space $(E,\tau)$ is a Mackey group.
\end{proposition}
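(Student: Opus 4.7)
The plan is to show that every locally quasi-convex topology $\nu$ on $E$ that is compatible with $\tau$ satisfies $\nu\leq\tau$. I would pick an arbitrary quasi-convex $\nu$-neighborhood $U$ of zero and show that $U$ is a $\tau$-neighborhood of zero, which is enough because $(E,\nu)$ is locally quasi-convex.

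First, I would use Fact \ref{f:Mackey-Ban} applied to $(E,\nu)$ to conclude that the polar $U^{\triangleright}\subseteq \widehat{(E,\nu)}$ is $\sigma(\widehat{(E,\nu)},E)$-compact. By the compatibility of $\tau$ and $\nu$ we have $\widehat{(E,\tau)}=\widehat{(E,\nu)}$, so Fact \ref{f:Mackey-group-lcs} gives a group isomorphism $p:E'\to \widehat{(E,\nu)}$, $p(f)=\exp\{2\pi i f\}$. Transporting $U^{\triangleright}$ through $p^{-1}$ and invoking Fact \ref{f:Mackey-ReT}, I would get that $B:=p^{-1}(U^{\triangleright})$ is a $\sigma(E',E)$-compact subset of $E'$, and in particular $\sigma(E',E)$-bounded.

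Now the barrelledness of $(E,\tau)$ kicks in: the $\sigma(E',E)$-bounded set $B$ must be equicontinuous in $(E,\tau)$. Hence there exists a $\tau$-neighborhood $V$ of zero such that $|f(x)|\leq 1/4$ for every $f\in B$ and every $x\in V$. For such an $f$, writing $\chi=p(f)=\exp\{2\pi i f\}$ and using $|2\pi f(x)|\leq \pi/2$, one gets $\mathrm{Re}(\chi(x))\geq 0$, i.e. $\chi(V)\subseteq \Ss_+$. Thus $\chi(V)\subseteq\Ss_+$ holds for every $\chi\in U^{\triangleright}$, which by definition means $V\subseteq U^{\triangleright\triangleleft}$. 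Since $U$ is quasi-convex, $U^{\triangleright\triangleleft}=U$, so $V\subseteq U$ and $U$ is indeed a $\tau$-neighborhood of zero.

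The only real subtlety in this outline is the bookkeeping at the interface between the real-linear setting (where equicontinuity is phrased in terms of $|f(x)|$) and the circle-group setting (where quasi-convexity is phrased via $\Ss_+$); the constant $1/4$ is chosen precisely so that the image under $\exp\{2\pi i\,\cdot\,\}$ of $[-1/4,1/4]$ lies in $\Ss_+$. Everything else is a direct application of the three facts already recorded.
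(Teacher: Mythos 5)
Your proposal is correct and follows essentially the same route as the paper's proof: apply Fact \ref{f:Mackey-Ban} to get $\sigma(\widehat{E},E)$-compactness of $U^{\triangleright}$, pull back via $p^{-1}$ using Fact \ref{f:Mackey-ReT}, invoke barrelledness for equicontinuity, and rescale by $1/4$ so that the exponential lands in $\Ss_+$ and quasi-convexity of $U$ finishes the argument. Your neighborhood $V$ is exactly the paper's $\frac14 K^{\circ}$.
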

\begin{proof}
Let $\nu$ be a locally quasi-convex compatible group topology on $E$ associated with $\tau$ and let $U$ be a quasi-convex $\nu$-neighborhood of zero. We have to show that $U$ is also a $\tau$-neighborhood of zero.

Fact \ref{f:Mackey-Ban} implies that $U^\triangleright$ is a compact subset of $(\widehat{E}, \sigma(\widehat{E},E))$. Hence the set $K:=p^{-1}(U^\triangleright)$ is a $\sigma(E',E)$-compact subset of $E'$ by Fact \ref{f:Mackey-ReT}. As $(E,\tau)$ is barrelled,  $K$ is equicontinuous. Hence the polar $K^\circ$ of $K$ in the lcs $E$ is a $\tau$-neighborhood of zero, see \cite[Theorem 8.6.4]{NaB}. Therefore
\[
\begin{split}
\frac{1}{4} K^\circ & =\left\{ x\in E: |\chi(x)|\leq \frac{1}{4}, \forall \chi\in K\right\} \\
& \subseteq \{ x\in E: p(\chi)(x)=\exp\{ 2\pi i \chi(x)\} \in\Ss_+ , \forall\chi\in K\} =\big( p(K)\big)^\triangleleft = U^{\triangleright\triangleleft} =U.
\end{split}
\]
Thus $U$ is a $\tau$-neighborhood of zero.
\end{proof}

Recall that, for a Tychonoff space $X$, the sets of the form
\[
W[y_1,\dots,y_s,\e]:= \{ f\in C(X): |f(y_i)|<\e, i=1,\dots,s\}
\]
form a base at zero in $C_p(X)$, and  the dual space of $C_p(X)$ is the space
\begin{equation} \label{equ:Mackey-dual-Cp}
(C_p(X))'=\left\{ \sum_{i=1}^m \lambda_i \delta_{x_i}: x_1,\dots,x_m \in X, \; \lambda_1,\dots,\lambda_m \in\IR, \; m\in\NN\right\},
\end{equation}
where $\delta_x (f):= f(x)$.

The following result is Lemma 11.7.1 of \cite{Jar}.
\begin{fact} \label{l:Mackey-discrete}
If $A$ is an infinite subset of a Tychonoff space $X$, then  there is a one-to-one sequence $\{ a_n \}_{n\in\NN}$ in $A$ and a sequence $\{ U_n\}_{n\in\NN}$  of open sets in $X$ such that $a_n\in U_n$ and $\overline{U_n}\cap \overline{U_k}=\emptyset$ for every distinct $n,k\in\NN$.
\end{fact}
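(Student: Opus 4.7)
My plan is to extract a one-to-one sequence $\{b_n\}_{n\in\NN}\subseteq A$ and then construct the desired $a_k$ and $U_k$ by recursion on $k$. At each inductive step I will invoke complete regularity of $X$ to produce a continuous $f\colon X\to[0,1]$ that vanishes at the newly chosen point $a_{k+1}$ and is identically $1$ on a prearranged closed ``obstruction'' set; then $U_{k+1}:=f^{-1}([0,1/3))$ gives $\overline{U_{k+1}}\subseteq f^{-1}([0,1/3])$, which is automatically disjoint from the obstruction.

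To make the recursion close I will split into two cases according to whether the set $B:=\{b_n:n\in\NN\}$ has an accumulation point in $X$. In the \emph{discrete case} (no accumulation point in $X$), $B$ and all its subsets are closed in $X$, so I carry the invariant that $a_1,\dots,a_k$ are distinct points of $B$, $a_i\in U_i$, the $\overline{U_i}$ are pairwise disjoint, and $\overline{U_i}\cap B=\{a_i\}$. For the step I choose any $a_{k+1}\in B\setminus\{a_1,\dots,a_k\}$ (the invariant forces $a_{k+1}\notin F_k:=\bigcup_{i\le k}\overline{U_i}$) and separate $a_{k+1}$ from the closed set $F_k\cup(B\setminus\{a_{k+1}\})$ by a Tychonoff function, obtaining $U_{k+1}$ that preserves the invariant.

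In the \emph{cluster case} there is $p\in X$ every neighborhood of which meets $B\setminus\{p\}$. Dropping at most one term, I may assume $p\notin B$; the $T_1$ axiom then implies that every neighborhood of $p$ contains \emph{infinitely many} points of $B$. The invariant now is: $a_1,\dots,a_k$ are distinct points of $B$, $a_i\in U_i$, the $\overline{U_i}$ are pairwise disjoint, and $p\notin\overline{U_i}$ for each $i$. Since $F_k$ is closed and misses $p$, the neighborhood $X\setminus F_k$ of $p$ still contains infinitely many $b_n$'s; I pick $a_{k+1}$ from among them (distinct from $a_1,\dots,a_k$) and separate $a_{k+1}$ from the closed set $F_k\cup\{p\}$ by a Tychonoff function to obtain $U_{k+1}$.

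The main obstacle a naive induction would face is this: if $a_{k+1}$ happens to be a limit point of the remaining $b_n$'s, then every neighborhood of $a_{k+1}$ (in particular $\overline{U_{k+1}}$) would swallow a tail of the sequence, and the pool of future candidates could shrink to a finite set. The case split sidesteps this cleanly: in the discrete case there are no such limit points at all, while in the cluster case the reserved point $p$ acts as a perpetual reservoir of accumulation, keeping infinitely many $b_n$ inside $X\setminus F_k$ at every stage of the construction.
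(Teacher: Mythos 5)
Your proof is correct, and note that there is nothing internal to compare it against: the paper states this as a Fact quoted verbatim from Lemma 11.7.1 of Jarchow's book and gives no proof, so your argument is a genuine self-contained addition rather than a variant of the paper's. The two hinge points both check out. In the discrete case, the absence of accumulation points of $B=\{b_n:n\in\NN\}$ makes every subset of $B$ closed (any point of $\overline{S}\setminus S$ for $S\subseteq B$ would be an accumulation point of $B$), so the obstruction set $F_k\cup(B\setminus\{a_{k+1}\})$ is closed and misses $a_{k+1}$, and complete regularity applies; the strengthened invariant $\overline{U_i}\cap B=\{a_i\}$ is exactly what guarantees that every fresh candidate from $B$ automatically avoids $F_k$, so the recursion never stalls. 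In the cluster case, since Tychonoff spaces are $T_1$, an accumulation point $p\notin B$ has every neighborhood containing infinitely many points of $B$; keeping $p$ out of every $\overline{U_i}$ makes $X\setminus F_k$ a neighborhood of $p$ at each stage, which is precisely the ``perpetual reservoir'' you need, and separating $a_{k+1}$ from the closed set $F_k\cup\{p\}$ (closed by $T_1$) preserves the invariant. The device $U_{k+1}=f^{-1}([0,1/3))$ with $\overline{U_{k+1}}\subseteq f^{-1}([0,1/3])$ correctly converts the functional separation into disjointness of closures. You also correctly identified why a naive one-case induction fails --- a chosen $\overline{U_{k+1}}$ could swallow all but finitely many remaining candidates --- and the dichotomy on whether $B$ has an accumulation point is exhaustive and disposes of exactly this obstacle, at the modest cost of a case analysis that a proof via an a priori extracted strongly discrete subfamily might avoid.
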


The following group plays an essential role in the proof of Theorem \ref{t:Mackey-barrelled-main}.
Set
\[
c_0 (\Ss):= \{ (z_n) \in \Ss^\mathbb{N} :\; z_n \to 1\},
\]
and denote by $\Ff_0 (\Ss)$ the group $c_0 (\Ss)$ endowed with the metric  $d\big((z_n^1), (z_n^2 ) \big)= \sup \{ |z_n^1 -z_n^2 |, n\in\NN \}$. Then $\Ff_0 (\Ss)$ is a Polish group, and the sets of the form $V^\NN \cap c_0(\Ss)$, where $V$ is an open neighborhood at the identity of $\Ss$, form a base at the identity in $\Ff_0 (\Ss)$. Actually $\Ff_0 (\Ss)$ is isomorphic to $c_0/\ZN$  (see \cite{Gab}), where $\ZN$ is the direct sum $\bigoplus_\mathbb{N} \mathbb{Z}$. Denote by $Q:c_0\to \Ff_0 (\Ss)$ the quotient map, so $Q\big( (x_n)_{n\in\NN}\big) = \big( q(x_n)\big)_{n\in\NN}$, where $q:\IR\to \Ss$ is defined by $q(x)=\exp\{ 2\pi i x\}$.
The group $\Ff_0 (\Ss)$ is locally quasi-convex and $\widehat{\Ff_0 (\Ss)}=\ZN$ by Theorem 1 of \cite{Gab}.

For $x\in\IR$ we write $[x]$ for the integral part of  $x$.
Now we are ready to prove Theorem \ref{t:Mackey-barrelled-main}.

\begin{proof}[Proof of Theorem \ref{t:Mackey-barrelled-main}]
By the Buchwalter--Schmets theorem and Proposition \ref{p:Mackey-Chasco} to prove the theorem we have to show only that, if $C_p(X)$ is a Mackey group, then every functionally bounded subset of $X$ is finite. Suppose for a contradiction that there is an infinite functionally bounded subset $A$ of $X$. By Fact \ref{l:Mackey-discrete}, there is a one-to-one sequence $\{ a_n \}_{n\in\NN}$ in $A$ and a sequence $\{ U_n\}_{n\in\NN}$  of open sets in $X$ such that $a_n\in U_n$ and $\overline{U_n}\cap \overline{U_k}=\emptyset$ for every distinct $n,k\in\NN$.

For every $n\in\NN$ set $\chi_n :=\frac{1}{n} \delta_{a_n}$. As $A$ is functionally bounded in $X$,  $\chi_n(f)\to 0$ for every $f\in C(X)$. So we can define the linear injective operator $F: C(X) \to C_p(X)\times c_0$ and monomorphism $F_0 : C(X) \to C_p(X)\times \Ff_0(\Ss)$  setting ($\forall f\in C(X)$)
\[
\begin{split}
F(f) & := \big( f, R(f)\big), \mbox{ where } R(f):= \big(\chi_n(f)\big) \in c_0, \\
F_0(f)& := \big( f, R_0(f)\big), \mbox{ where } R_0(f):= Q\circ R(f)=\big( \exp\{ 2\pi i \chi_n(f)\} \big) \in \Ff_0(\Ss).
\end{split}
\]
Denote by $\Tt$ and $\Tt_0$ the topology on $C(X)$ induced from $C_p(X)\times c_0$ and $C_p(X)\times \Ff_0(\Ss)$, respectively. So $\Tt$ is a locally convex vector topology on $C(X)$ and $\Tt_0$ is a locally quasi-convex group topogy on $C(X)$ (since the group $\Ff_0(\Ss)$ is locally quasi-convex, and a subgroup of a product of locally quasi-convex groups is clearly locally quasi-convex). Denote by $\tau_p$ the pointwise topology on $C(X)$. Then, by construction, $\tau_p \leq \Tt_0 \leq\Tt$, so taking into account Fact \ref{f:Mackey-group-lcs} we obtain
\begin{equation}\label{equ:Mackey-Cp-1}
\widehat{ C_p(X)} \subseteq \widehat{\big( C(X),\Tt_0\big)} \subseteq p\big(  (C(X),\Tt)' \big).
\end{equation}

Let us show that the topologies $\tau_p$ and $\Tt_0$ are compatible. By (\ref{equ:Mackey-Cp-1}), it is enough to show that each character of $\big( C(X),\Tt_0\big)$ is a character of $C_p(X)$. Fix $\chi\in \widehat{\big( C(X),\Tt_0\big)}$. Then  (\ref{equ:Mackey-Cp-1}) and the Hahn--Banach extension theorem imply that $\chi=p(\eta)$ for some
\[
\eta =\big(\xi,(c_n)\big)\in (C_p(X))' \times \ell_1, \mbox{ where } \xi \in (C_p(X))' \mbox{ and } (c_n)\in \ell_1,
\]
and
\[
\eta (f)=\xi(f) + \sum_{n\in\NN} c_n \chi_n (f), \quad \forall f\in C(X).
\]
To prove that $\chi\in \widehat{ C_p(X)}$ it is enough to show that $c_n=0$ for almost all indices $n$. Suppose for a contradiction that  $|c_n|>0$ for infinitely many indices $n$. Take a neighborhood $U$ of zero in $\Tt_0$ such that (see Fact \ref{f:Mackey-group-lcs})
\begin{equation} \label{equ:Mackey-Cp-2}
\eta(U) \subseteq \left( -\frac{1}{10},\frac{1}{10}\right) + \mathbb{Z}.
\end{equation}
We can assume that $U$ has a canonical form
\[
U=F_0^{-1}\left( \left( W[y_1,\dots,y_s;\e]\times \big(V^\NN \cap c_0(\Ss)\big)\right) \cap F_0\big(C(X)\big) \right),
\]
for some points $y_1,\dots,y_s\in X$, $\e >0$ and a neighborhood $V$ of the identity of $\Ss$.
Let $\xi = \sum_{i=1}^m \lambda_i \delta_{x_i}$ for some points $x_1,\dots, x_m\in X$ and $\lambda_1,\dots,\lambda_m \in\IR$, see (\ref{equ:Mackey-dual-Cp}). Since $|c_n|>0$ for infinitely many indices and all $a_n$ are distinct, we can find an index $\alpha$ such that $0<|c_\alpha|<1/100$ (recall that $(c_n)\in\ell_1$) and $a_\alpha \not\in \{ x_1,\dots, x_m, y_1,\dots,y_s\}$. Choose a continuous function $f:X \to [0,1]$ such that
\[
f(X\setminus U_\alpha)=\{ 0\} , \; f(x_1)=\dots = f(x_m)=f(y_1)=\dots =f(y_s)=0 \mbox{ and }  f(a_\alpha)=1,
\]
and set $h(x)= \left[ \frac{1}{4c_\alpha} \right] f(x)$. As $R_0(h)=0$, it is clear that $h\in U$. Setting $r_\alpha := \frac{1}{4c_\alpha} - \left[ \frac{1}{4c_\alpha}\right]$ (and hence $0\leq r_\alpha <1$) we obtain
\[
\frac{1}{4}-\frac{1}{100} < \eta (h)= c_\alpha h(a_\alpha) =c_\alpha \left( \frac{1}{4c_\alpha} - r_\alpha\right) = \frac{1}{4} -c_\alpha r_\alpha <\frac{1}{4}+\frac{1}{100}.
\]
But these inequalities contradict the inclusion (\ref{equ:Mackey-Cp-2}). This contradiction shows that $c_n=0$ for almost all indices $n$, and hence $\eta\in \big(C_p(X)\big)'$. Thus $\tau_p$ and $\Tt_0$ are compatible.

To finish the proof we have to show that the topology  $\Tt_0$  is strictly finer than $\tau_p$. For every $n\in\NN$ take a function $f_n:X\to [0,1/2]$ such that
\[
f_n(X\setminus U_n)=\{ 0\}  \mbox{ and }  f(a_n)=1/2.
\]
As $\overline{U_n}\cap \overline{U_k}=\emptyset$ for every distinct $n,k\in\NN$, $f_n \to 0$ in the pointwise topology $\tau_p$. On the other hand,
\[
F_0(f_n) =\big( f_n, (0,\dots,0,-1,0,\dots)\big),
\]
where $-1$ is placed in position $n$. So $f_n\not\to 0$ in $\Tt_0$. Thus $\Tt_0$ is strictly finer than $\tau_p$.
\end{proof}

We end with the following question.
\begin{question}
For which Tychonoff spaces $X$ the space $\CC(X)$ of all continuous functions endowed with the compact-open topology is a Mackey group?
\end{question}

\bibliographystyle{amsplain}

\end{document}